\documentclass[english,11pt]{article}

\usepackage[T1]{fontenc}
\usepackage{babel}
\usepackage{amsmath,amsfonts,amsthm}
\usepackage{pdfsync}
\usepackage{color}

\setlength{\oddsidemargin}{5mm} \setlength{\evensidemargin}{5mm}
\setlength{\textwidth}{150mm} \setlength{\headheight}{0mm}
\setlength{\headsep}{12mm} \setlength{\topmargin}{0mm}
\setlength{\textheight}{220mm} \setcounter{secnumdepth}{2}

\DeclareMathSymbol{\leqslant}{\mathalpha}{AMSa}{"36} 
\DeclareMathSymbol{\geqslant}{\mathalpha}{AMSa}{"3E} 
\renewcommand{\leq}{\;\leqslant\;}                   
\renewcommand{\geq}{\;\geqslant\;}                   

\newtheorem{Th}{Theorem}
\newtheorem{Le}[Th]{Lemma}
\newtheorem{Pro}[Th]{Proposition}

\newcommand{\cA}{\ensuremath{\mathcal A}}
\newcommand{\cB}{\ensuremath{\mathcal B}}

\newcommand{\cD}{\ensuremath{\mathcal D}}
\newcommand{\cE}{\ensuremath{\mathcal E}}

\newcommand{\cH}{\ensuremath{\mathcal H}}

\newcommand{\cP}{\ensuremath{\mathcal P}}

\newcommand{\cR}{\ensuremath{\mathcal R}}

\newcommand{\cX}{\ensuremath{\mathcal X}}

\newcommand{\bbD}{{\ensuremath{\mathbb D}} }
\newcommand{\bbE}{{\ensuremath{\mathbb E}} }

\newcommand{\bbN}{{\ensuremath{\mathbb N}} }

\newcommand{\bbP}{{\ensuremath{\mathbb P}} }

\newcommand{\bbR}{{\ensuremath{\mathbb R}} }

\newcommand{\Om}{\Omega}
\newcommand{\E}{\bbE}

\newcommand{\Ne}{\bbN^{\ast}}
\newcommand{\R}{\bbR}
\newcommand{\bbd}{\mathbf{d}}

\newcommand{\tN}{\tilde{N}}
\newcommand{\hOm}{\hat{\Om}}
\newcommand{\hcA}{\hat{\cA}}
\newcommand{\hbbP}{\hat{\bbP}}
\newcommand{\hE}{\hat{\E}}
\newcommand{\hw}{\hat{w}}
\newcommand{\crea}{\varepsilon^+}
\newcommand{\anni}{\varepsilon^-}
\newcommand{\sbbD}{\underline{\bbD}}

\newcommand{\LP}{\cH_{\cP}}
\newcommand{\LD}{{\cH}_{\bbD}}
\newcommand{\LDP}{{\cH}_{\bbD,\cP}}
\newcommand{\LDdP}{{\cH}_{\bbD\otimes{\bbd},\cP}}

\newcommand{\xs}{X_{s^-}}
\newcommand{\alu}{^{(\alpha ,u)}}

\newcommand{\bK}{\bar{K}}

\title{Dirichlet Forms for Poisson Measures and L\'evy Processes: The Lent Particle Method}\date{}
\author{Nicolas BOULEAU and Laurent DENIS}
\begin{document}
\maketitle
\date{}
\begin{abstract}
 \hspace{.3cm}We present a new approach to absolute continuity of laws of
Poisson functionals. The theoretical framework is that of local Dirichlet forms as a tool to study probability spaces. The method gives rise to a new explicit calculus that we show first on some simple examples : it consists in adding a particle and taking it back after computing the gradient. Then we apply it to SDE's driven by Poisson measure.
\end{abstract}

\section{Introduction}
In order to situate the method it is worth to emphasize some features of the Dirichlet forms approach with comparison to the Malliavin calculus which is generally better known among probabilists.

First the arguments hold under only Lipschitz hypotheses : for example the method applies to a stochastic differential equation with Lipschitz coefficients (cf.$\!$ our second lecture in this volume). Second a general criterion exists, (EID) the Energy Image Density property, (proved on the Wiener space for the Ornstein-Uhlenbeck form, still a conjecture in general cf.$\!$ Bouleau-Hirsch \cite{bouleau-hirsch2} but established in the case of random Poisson measures with natural hypotheses) which ensures the existence of a density for a $\mathbb{R}^d$-valued random variable. Third, Dirichlet forms are easy to construct in the infinite dimensional frameworks encountered in probability theory and this yields a theory of errors propagation through the stochastic calculus, especially for finance and physics cf.$\!$ Bouleau \cite{bouleau3}, but also for numerical analysis of PDE's and SPDE's cf.$\!$ Scotti \cite{scotti}.

Our aim is to extend, thanks to Dirichlet forms, the Malliavin calculus applied to the case of Poisson measures and SDE's with jumps. Let us recall that in the case of jumps, there are several ways for applying the ideas of Malliavin calculus. The works are based either on the chaos decomposition (Nualart-Vives
  \cite{nualart-vives}) and provide tools in analogy with the Malliavin calculus on Wiener space,
  but non-local (Picard \cite{picard}, Ishikawa-Kunita \cite{ishikawa-kunita}) or dealing with local operators acting on the size of the jumps using the expression
  of the generator on a sufficiently rich class and closing the structure, for instance by Friedrichs' argument (cf.$\!$  especially Bichteler-Gravereaux-Jacod \cite{bichteler-gravereaux-jacod} Coquio \cite{coquio} Ma-R\"ockner \cite{ma-rockner2}).
  
  We follow a way close to this last one. We will first expose the method from a practical point of view, in order to show how it acts on concrete cases. Then in a separate part we shall give the main elements of the proof of the main theorem on the lent particle formula. Eventually we will display several examples where the method improves known results.  Then, in the last section, we shall apply the lent particle method to SDE's  driven by a Poisson measure or a Lévy process. Complete details of the proofs and hypotheses for getting (EID) are published in \cite{bouleau4} and \cite{bouleau-denis1}.

  \section{The lent particle method}
  
  Consider a random Poisson measure as a distribution of points, and let us see a L\'evy process as defined by a Poisson measure, that is let us think on the {\it configuration space}. We suppose the particles live in a space (called the bottom space) equipped with a local Dirichlet form with carr\'e du champ and gradient. This makes it possible to construct a local Dirichlet form with carr\'e du champ on the configuration space (called the upper space). To calculate for some functional the Malliavin matrix  -- which  in the framework of Dirichlet forms becomes the carr\'e du champ matrix -- the method consists first in adding a particle to the system. The functional then possesses a new argument which is due to this new particle. We can compute the bottom-gradient of the functional with respect to this argument and as well its bottom carr\'e du champ. Then taking back the particle we have added does not erase the new argument of the obtained functional. We can integrate the new argument with respect to the Poisson measure and this gives the upper carr\'e du champ matrix -- that is the Malliavin matrix. This is the exact summary of the method.\\
  
\subsection{ Let us give  more details and notation.} 
  
    Let $(X,\cX
,\nu,\bbd,\gamma)$ be a local symmetric Dirichlet structure which
admits a carr\'e du champ operator. This means that $(X,\cX ,\nu )$ is a
measured space, $\nu$ is $\sigma$-finite
and the bilinear form
$ e [f,g]=\frac12\int\gamma [f,g]\, d\nu,$
is a local  Dirichlet form with domain $\bbd\subset L^2 (\nu )$
and carr\'e du champ $\gamma$ (cf Fukushima-Oshima-Takeda \cite{fukushima-oshima-takeda} and Bouleau-Hirsch \cite{bouleau-hirsch2}). $(X,\cX
,\nu,\bbd,\gamma)$ is called the bottom space.

Consider a Poisson random measure $N$ on $[0,+\infty[\times X$ with intensity measure $dt\times \nu$.

\noindent A Dirichlet structure may be constructed canonically on the probability space of this Poisson measure that we denote $(\Omega_1, \mathcal{A}_1, \mathbb{P}_1, \mathbb{D}, \Gamma)$. We call this space the upper space.

$\mathbb{D}$ is a set of functions in the domain of $\Gamma$, in other words a set of random variables which are functionals of the random distribution of points. The main result  is the following formula:

 For all $F\in\bbD$
\[\Gamma [F] =\int_0^{+\infty}\int_X \anni(\gamma [\crea F ])\, dN\]

\noindent in which $\crea$ and $\anni$  are the creation and annihilation operators.

Let us explain the meaning and the use of this formula on an example. \\
\subsection{First example.}

Let  $Y_t$ be a centered L\'evy process with  L\'evy measure $\nu$ integrating $x^2$. We assume that $\nu$ is such that a local Dirichlet structure may be constructed on $\mathbb{R}\backslash\{0\}$ with carr\'e du champ
$
\gamma[f]=x^2f^{\prime 2}(x).
$

The notion of gradient in the sense of Dirichlet forms is explained in \cite{bouleau-hirsch2} Chapter V. It is a linear operator with values in an auxiliary Hilbert space giving the carr\'e du champ by taking the square of the Hilbert norm. It is convenient to choose for the Hilbert space a space $L^2$ of a probability space.

Here we define a gradient  $\flat$ associated with $\gamma$ by choosing $\xi$ such that $\int_0^1\xi(r)dr=0$ and $\int_0^1\xi^2(r)dr=1$ and putting
$$f^\flat=xf^\prime(x)\xi(r).
$$
Practically $\flat$ acts as a derivation with the chain rule $(\varphi(f))^\flat=\varphi^\prime(f).f^\flat$ (for $\varphi\in\mathcal{C}^1\cap Lip$ or even only Lipschitz).

 $N$ is the  Poisson random measure associated with $Y$ with intensity $dt\times\sigma$ such that $\int_0^th(s)\;dY_s=\int{\bf 1}_{[0,t]}(s)h(s)x\tilde{N}(dsdx)$ for $h\in L^2_{loc}(\mathbb{R}_+)$.
 
We study the regularity of  $$ V=\int_0^t\varphi(Y_{s-})dY_s$$ where $\varphi$ is Lipschitz and $\mathcal{C}^1$.

$1^o$. First step. We add a particle $(\alpha,x)$ i.e. a jump to $Y$ at time $\alpha$ with size $x$ what gives

$$\varepsilon^+V=V+\varphi(Y_{\alpha-})x+\int_{]\alpha}^t(\varphi(Y_{s-}+x)-\varphi(Y_{s-}))dY_s
$$

$2^o$. $V^\flat=0$ since $V$ does not depend on $x$, and

$$(\varepsilon^+V)^\flat=\left(\varphi(Y_{\alpha-})x+\int_{]\alpha}^t\varphi^\prime(Y_{s-}+x)xdY_s\right)\xi(r)\quad$$

\noindent because $x^\flat=x\xi(r)$.\\

$3^o$. We compute

$$\gamma[\varepsilon^+V]=\int(\varepsilon^+V)^{\flat2}dr=(\varphi(Y_{\alpha-})x+\int_{]\alpha}^t\varphi^\prime(Y_{s-}+x)xdY_s)^2$$

$4^o$. We take back the particle what gives $\varepsilon^-\gamma[\varepsilon^+V]= (\varphi(Y_{\alpha-})x+\int_{]\alpha}^t\varphi^\prime(Y_{s-})xdY_s)^2$

and compute $\Gamma[V]=\int\anni\gamma[\varepsilon^+V]dN$ (lent particle formula) 
$$\Gamma[V]=\int\left(\varphi(Y_{\alpha-})+\int_{]\alpha}^t\varphi^\prime(Y_{s-})dY_s\right)^2x^2\;N(d\alpha dx)$$ 
$$
=\sum_{\alpha\leq t}\Delta Y_\alpha^2(\int_{]\alpha}^t\varphi^\prime(Y_{s-})dY_s+\varphi(Y_{\alpha-}))^2.
$$ where $\Delta Y_\alpha=Y_\alpha-Y_{\alpha-}$.

For real functional, (EID) is always true : $V$ possesses a density as soon as $\Gamma[V]>0$. Then the above expression may be used to discuss the strict positivity of $\Gamma[V]$ depending on the finite or infinite mass of $\nu$ cf.$\!$ \cite{bouleau-denis1} Example 5.2.\\ Before giving a   typical set of assumptions  that the Lévy  measure $\nu$ has to fulfill, let us explicit the (EID) property.
\subsection{ Energy Image Density property (EID).}

A Dirichlet form on  $L^2(\Lambda)$ ($\Lambda$ $\sigma$-finite) with carr\'e du champ $\gamma$
satisfies (EID) if, for any $d$ and all $U$
 with values in $\mathbb{R}^d$ whose components are in the domain of the form,  the image by $U$ of the measure with density with respect to $\Lambda$ the determinant of the carr\'e du champ matrix is absolutely continuous with respect to the Lebesgue measure i.e.
 $$ U_*[({\det}{{\gamma}}[U,U^t])\cdot
\Lambda ]\ll \lambda^d .$$
This property is true for the Ornstein-Uhlenbeck form on the Wiener space, and in several other cases cf. Bouleau-Hirsch \cite{bouleau-hirsch2}.
It was conjectured in 1986 that it were always true. It is still a conjecture.\\

It is therefore necessary to prove this property in the context of Poisson random measures. With natural hypotheses, cf.$\!$ \cite{bouleau-denis1} Parts 2 and 4,  as soon as  EID is true for the bottom space, 
 EID is true for the upper space.
Our proof uses a result of   Shiqi Song \cite{song}.\\

 \subsection{Main example of bottom structure in $\R^d$}\label{example}
 Let $(Y_t )_{t\geq 0}$ be a $d$-dimensional Lévy process, with L\'evy measure $\nu =k dx$. Under standard hypotheses, we have the following representation:
\[ Y_t =\int_0^t \int_{\R^d} u\tN (ds ,du),\]
where $\tN$ is a compensated Poisson measure with intensity $dt\times kdx$. In this case, the idea is to introduce an ad-hoc Dirichlet structure on $\R^d$\\
The following example gives a case of such a structure $(\bbd ,e)$ which satisfies all the required hypotheses and which is flexible enough to encompass  many cases:
\begin{Le}{\label{lemmed}}
 Let $r\in \Ne$, $(X,\cX )=(\R^r ,\mathcal{B}(\R^r))$ and $\nu =k dx$ where $k$ is non-negative and Borelian. We are given  $\xi=(\xi_{ij})_{1\leq i,j\leq r}$ an $\R^{r\times r}$-valued and
symmetric Borel function. We assume that there exist an open set $O\subset \R^r$ and a  function $\psi$ continuous on $O$ and null on $\R^r \setminus O$ such that
\begin{enumerate}
\item  $k>0$ on $O$ $\nu$-a.e. and is locally bounded on $O$.
\item  $\xi$ is locally bounded and  locally elliptic on $O$.
\item $k\geq \psi>0$ $\nu$-a.e. on $O$.
\item for all $i,j\in \{ 1,\cdots ,r\}$, $\xi_{i,j}\psi$ belongs to $H^1_{loc} (O)$.
\end{enumerate}
We denote by $H$ the subspace of functions $f\in L^2 (\nu )\cap L^1 (\nu )$ such that the restriction of $f$ to $O$ belongs to $C_c^{\infty} (O )$.
Then, the bilinear form defined by
\[\forall f,g\in H,\ e(f,g)=\sum_{i,j=1}^r \int_O \xi_{i,j}(x)\partial_i f(x)\partial_j g (x)\psi(x)\, dx\]
is closable in $L^2 (\nu)$. Its closure, $(\bbd ,e)$, is a local Dirichlet form on $L^2 (\nu)$ which admits a carr\'e du champ $\gamma$:
\[ \forall f\in \bbd,\  \gamma (f)(x)=\sum_{i,j=1}^r \xi_{i,j}(x)\partial_i f(x)\partial_j f(x)\frac{\psi(x)}{k(x)}.\]Moreover, it satisfies
 {property {\rm(EID)}} i.e. for any $d$ and for any $\mathbb{R}^d$-valued function $U$
whose components are in the domain of the form
$$ U_*[({\det}\gamma[U,U^t])\cdot
\nu ]\ll \lambda^d $$ where {\rm det} denotes the determinant
and $\lambda^d$ the Lebesgue measure on $(\R^d ,\cB (\R^d ))$.\\
\end{Le}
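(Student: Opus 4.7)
The plan is to establish the three assertions in turn: closability of $e$ on $H$, identification of the carré du champ (together with locality), and the (EID) property. The first two follow standard weighted-Sobolev techniques; the core of the work is (EID).

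\emph{Closability.} Since $\psi$ is continuous and strictly positive on $O$, on every compact $K\subset O$ it satisfies $\psi\geq c_K>0$, and hence $k\geq c_K>0$; thus $L^2(\nu)$ embeds continuously in $L^2_{\mathrm{loc}}(O,dx)$. For $f,\varphi\in H$, using that $\varphi|_O\in C_c^\infty(O)$ and $\xi_{ij}\psi\in H^1_{\mathrm{loc}}(O)$, integration by parts on $O$ gives
\[
e(f,\varphi)=-\sum_{i,j=1}^{r}\int_O f\,\partial_i\bigl(\xi_{ij}\,\psi\,\partial_j\varphi\bigr)\,dx.
\]
If $(f_n)\subset H$ is Cauchy for $e$ and $f_n\to 0$ in $L^2(\nu)$, then $f_n\to 0$ in $L^2_{\mathrm{loc}}(O,dx)$, so the right-hand side tends to $0$ for every $\varphi\in H$. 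Together with the Cauchy property of the weighted gradients $\sqrt{\xi\psi}\,\nabla f_n$ in $L^2(O,dx)^r$ and local ellipticity of $\xi$, this forces $e(f_n,f_n)\to 0$, proving closability.

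\emph{Carré du champ and locality.} On the smooth core $H$, a direct integration-by-parts computation shows that the pointwise expression
\[
\gamma(f)(x)=\sum_{i,j=1}^{r}\xi_{ij}(x)\,\partial_i f(x)\,\partial_j f(x)\,\frac{\psi(x)}{k(x)}
\]
satisfies the derivation identity characterising the carré du champ of $e$ (up to the usual normalisation convention). It extends from $H$ to $\bbd$ by the standard closure and truncation procedure for local Dirichlet forms. The form is strongly local because $\gamma(f)$ depends only on $\nabla f$ pointwise on $O$ and vanishes outside $O$.

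\emph{(EID).} For $U=(U_1,\ldots,U_d)$ with $U_i\in\bbd$, the pointwise formula yields on $O$ the factorisation
\[
\gamma[U,U^t]=\frac{\psi}{k}\,(\nabla U)\,\xi\,(\nabla U)^t,
\]
so that
\[
(\det\gamma[U,U^t])\cdot d\nu=\frac{\psi^d}{k^{d-1}}\,\det\!\bigl((\nabla U)\,\xi\,(\nabla U)^t\bigr)\,dx
\]
on $O$. Local ellipticity of $\xi$ identifies the set $\{\det\gamma[U,U^t]>0\}$ ($\nu$-a.e.) with $\{\mathrm{rank}(\nabla U)=d\}$, and on that set a classical Malliavin--Bouleau-Hirsch co-area argument shows that the image under $U$ of $|\det((\nabla U)\,\xi\,(\nabla U)^t)|\,dx$ is absolutely continuous with respect to $\lambda^d$. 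Combining the two observations delivers (EID).

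The principal technical obstacle lies precisely in this last step: one must give a pointwise meaning to $\nabla U$ for $U\in\bbd^d$ and approximate $U$ by smooth maps on the open subsets of $O$ where $\psi$ is bounded away from $0$ (which exhaust $O$) without destroying the factorisation of $\gamma[U,U^t]$, before passing to the limit via continuity of $\gamma$ in the graph norm. This is exactly what the classical treatment in Bouleau-Hirsch \cite{bouleau-hirsch2} (truncation plus mollification under local ellipticity) provides.
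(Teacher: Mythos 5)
You should first note that the paper itself does not prove this lemma: it is imported from \cite{bouleau-denis1} (and \cite{bouleau4}), so the only possible comparison is with the proof given there. Your outline follows the same general route as that reference: closability through integration by parts, which is exactly what hypothesis 4 ($\xi_{ij}\psi\in H^1_{loc}(O)$) is designed for, together with the domination $k\geq\psi$ to pass from convergence in $L^2(\nu)$ to local convergence on $O$; the explicit carr\'e du champ on the smooth core; and (EID) by reduction to the classical finite-dimensional energy image density theorem of Bouleau--Hirsch \cite{bouleau-hirsch1}, \cite{bouleau-hirsch2} on subsets of $O$ where $\psi$ is bounded below. So the strategy is the right one; but as written two steps are asserted rather than proved.

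First, in the closability argument, the bound $\psi\geq c_K>0$ on compacts uses positivity of $\psi$ everywhere on $O$, whereas hypothesis 3 only states it $\nu$-a.e.; and the concluding sentence ``together with the Cauchy property of the weighted gradients and local ellipticity, this forces $e(f_n,f_n)\to 0$'' is not a consequence of ellipticity at all: what is needed is the standard bilinear argument, namely that $e(f_n,\varphi)\to 0$ for every $\varphi$ in the core (which your integration by parts gives) combined with $e(f_n-f_m)\to 0$, writing $e(f_n)\leq |e(f_n,f_n-f_m)|+|e(f_n,f_m)|$ and letting $n$, then $m$, tend to infinity. Second, and more seriously, the (EID) part is the actual content of the lemma and your proposal delegates it wholesale to ``truncation plus mollification'' in \cite{bouleau-hirsch2}. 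That classical result applies to maps whose components have Sobolev regularity with respect to Lebesgue measure; elements of $\bbd$, defined by closing a degenerate weighted form in $L^2(k\,dx)$, have no such regularity a priori near $\partial O$ or where $\psi$ degenerates. The missing bridge is the localization: for any open $V$ with $\bar V\subset O$ compact and $\psi\geq\delta>0$ on $V$, an approximating sequence from $H$ is Cauchy in $H^1(V)$ because $k\geq\psi\geq\delta$ and $\xi$ is elliptic and bounded on $\bar V$, so every $U\in\bbd^d$ restricts to $H^1(V)^d$, the pointwise factorisation of $\gamma[U,U^t]$ holds a.e. on $V$, and the classical theorem (applied to $\det(\nabla U\,\nabla U^t)$, then transferred to $\det(\nabla U\,\xi\,\nabla U^t)$ via equality of their zero sets on $V$) gives absolute continuity of the image of the restricted measure; one then patches over a countable exhaustion of $O$ and uses that $\gamma$, hence $\det\gamma[U,U^t]\cdot\nu$, vanishes off $O$. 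You name this obstacle, which is good, but naming it and citing \cite{bouleau-hirsch2} does not discharge it: the reduction of $\bbd$-regularity to local Lebesgue--Sobolev regularity is precisely what has to be written out, and it is what the proof in \cite{bouleau-denis1} supplies.
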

{\it Remark:} In the case of a L\'evy process, we shall apply this Lemma with $\xi$ the identity application. We shall often consider an open domain 
of the form $O=\{ x\in \R^d ; \ |x| < \varepsilon\}$ which means that we  "differentiate" only w.r.t. small jumps and hypothesis $3.$ means that we do not need to assume regularity on $k$ but only that $k$ dominates a regular function.\\

  \subsection{ Multivariate example.}

Consider as in the previous section, a centered  Lévy process  without gausssian part $Y$ such that its Lévy measure $\nu$ satisfies assumptions of lemma \ref{lemmed} (which imply $1+\Delta Y_s\neq 0$ a.s.) with $d=1$ and $\xi (x)=x^2$.\\
 We want to study the existence of density for the pair  $(Y_t ,\mathcal{E}xp(Y )_t)$
where $\mathcal{E}xp(Y )$ is the Dol\'eans exponential of $Y$.

$$\label{exp}\mathcal{E}xp(Y)_t=e^{Y_t}\prod_{s\leq t}(1+\Delta Y_s)e^{-\Delta Y_s}.$$

\noindent$1^0/$ We add a particle $(\alpha ,y)$ i.e. a jump to $Y$ at time $\alpha\leq t$ with size $y$ :
$$\varepsilon^+_{(\alpha,y)}(\mathcal{E}xp(Y)_t)=e^{Y_t+y}\prod_{s\leq t}(1+\Delta Y_s)e^{-\Delta Y_s}(1+y)e^{-y}=\mathcal{E}xp(Y)_t(1+y).$$

\noindent$2^0/$ We compute $\gamma [\varepsilon^+ \mathcal{E}xp(Y)_t](y)=(\mathcal{E}xp(Y)_t)^2 y^2 \frac{\psi(y)}{k(y)}$.

\noindent$3^0/$ We take back the particle :
\[ \varepsilon^- \gamma [\varepsilon^+ \mathcal{E}xp(Y)_t]=\left( \mathcal{E}xp(Y)_t (1+y)^{-1}\right)^2 y^2\frac{\psi(y)}{k(y)}\]
we integrate in  $N$  and that gives the upper carr\'e du champ operator (lent particle formula):
\[
\begin{array}{rl} \Gamma [\mathcal{E}xp(Y)_t]&=\int_{[0,t]\times\mathbb{R}}\left( \mathcal{E}xp(Y)_t (1+y)^{-1}\right)^2 y^2\frac{\psi(y)}{k(y)}N(d\alpha dy)\\

&=\sum_{\alpha \leq t}\left( \mathcal{E}xp(Y)_t (1+\Delta Y_{\alpha})^{-1}\right)^2 \frac{\psi(\Delta Y_{\alpha})}{k(\Delta Y_{\alpha})}\Delta Y_{\alpha}^2.
\end{array}\]
By a similar computation  the matrix $\underline{\underline{\Gamma}}$ of the pair $(Y_t ,\mathcal{E}xp(Y_t ))$ is given by
\[ \underline{\underline{\Gamma}}=\sum_{\alpha \leq t}\left(
                                \begin{array}{cc}
                                  1 &  \mathcal{E}xp(Y)_t (1+\Delta Y_{\alpha})^{-1}\\
                                  \mathcal{E}xp(Y)_t (1+\Delta Y_{\alpha})^{-1} & \left( \mathcal{E}xp(Y)_t (1+\Delta Y_{\alpha})^{-1}\right)^2 \\
                                \end{array}
                              \right) \frac{\psi(\Delta Y_{\alpha})}{k(\Delta Y_{\alpha})}\Delta Y_{\alpha}^2 .
\]
Hence under hypotheses implying (EID), such as those of Lemma \ref{lemmed}, the density of the pair $(Y_t ,\mathcal{E}xp(Y_t ))$ is yielded by the condition
$$\mbox{dim  } \mathcal{L}\left(\left(\begin{array}{c}
1\\
\mathcal{E}xp(Y)_t(1+\Delta Y_\alpha)^{-1}
\end{array}\right)\quad \alpha\in JT\right)=2
$$
where $JT$ denotes the jump times of $Y$ between 0 and $t$. 

Making this in details we obtain

{\it Let $Y$ be a real L\'evy process with infinite L\'evy measure with density dominating near  $0$ a positive  function  locally in $H^1$, then the pair $(Y_t,\mathcal{E}xp(Y)_t)$ possesses a density on  $\mathbb{R}^2$.}

\section{Demonstration of the lent particle formula}
\subsection{The construction.}

\noindent Let us recall that $(X,\cX
,m,\bbd,\gamma)$ is a local Dirichlet structure with  carr\'e du champ called the bottom space, $m$ is  $\sigma$-finite
and the bilinear form $e [f,g]=\frac12\int\gamma [f,g]\, d m$
is a local Dirichlet form with domain $\bbd\subset L^2 ( m )$
and with carr\'e du champ $\gamma$.
For all $x\in X$, $\{ x\}$ is supposed to belong to $\cX$,  $ m$ is diffuse.
The associated generator is denoted   $a$, its domain is $\cD (a)\subset \bbd$. 

We consider a random Poisson measure  $N$,
on $(X,\cX , m )$ with intensity $m$. It is defined on $(\Om,
\cA, \bbP )$ where $\Om$ is the configuration space of countable sums of Dirac masses on  $E$, $\cA$
is the $\sigma$-field generated by  $N$ and   $\bbP$ is the law of  $N$.

 $(\Om, \cA ,\bbP)$
is called  the upper space. The question is to construct a Dirichlet structure on the upper space, induced "canonically" by the Dirichlet structure of the bottom space.

This question is natural by the following interpretation. The bottom structure may be thought as the elements for the description  of a single particle moving according to a symmetric Markov process associated with the bottom Dirichlet form. Then considering an infinite family of independent such particles with initial law given by  $(\Om, \cA ,\bbP)$ shows that a Dirichlet structure can be canonically considered on the upper space (cf. the introduction of \cite{bouleau-denis1} for different ways of tackling this question).\\

Because of some formulas on functions of the form  $e^{iN(f)}$ related to the Laplace functional, we consider the space of test functions $\cD_0$ to be the set of elements in $L^2 (\bbP )$
which are the linear combinations of variables of the form
$e^{i\tN (f)}$ with $f\in \left( \cD (a)\otimes L^2 (dt)\right)\bigcap L^1 (\nu\times dt)$, recall that $\tN =N-dt\times\nu$.\\
If $U=\sum_p \lambda_p e^{i\tN (f_p)}$ belongs to $\cD_0$, we put
\begin{equation}{\label{214}}
\tilde{A}_0 [U]=\sum_p \lambda_p e^{i\tN (f_p)}(i\tN ({a}[f_p
])-\frac12 N (\gamma [f_p])),\end{equation} where, in a natural way
, if $f(x,t)=\sum_l u_l (x) \varphi_l (t) \in \cD (a)\otimes
L^2 (dt)$
\[ {a} [f]=\sum_l a[u_l ]\varphi_l \makebox{ and } \gamma [f]=\sum_l \gamma [u_l ]\varphi_l.\]In order to show that
$A_0$  is uniquely defined and is the generator of a Dirichlet form satisfying the required properties, starting from a gradient of the bottom structure we construct a gradient for the upper structure defined first on the test functions. Then we show that this gradient does not depend on the form of the test function and this allows to extend the operators thanks to Friedrichs' property yielding the closedness of the upper structure.\\

\subsection{ The bottom gradient.}

We suppose the space  $\bbd$ separable, then there exists a gradient for the bottom space  :
There is a separable Hilbert space and a linear map  $D$ from  $\bbd$ into $L^2 (X, m;H)$ such that $\forall u\in \bbd$, $\| D[u ]\|^2_H =\gamma[u]$, then necessarily

 - If
$F:\R\rightarrow \R$ is Lipschitz  then
$\forall u\in\bbd$,

$\ D[F\circ u]=(F'\circ u )Du,$

- If $F$ is $\mathcal{C}^1$ and Lipschitz from $\R^d$ into
$\R$  then

$\ D[F\circ
u]=\sum_{i=1}^d (F'_i \circ u ) D[u_i ]\;\; \forall u=(u_1 ,\cdots ,u_d) \in \bbd^d.$\\

We take for $H$
a space $L^2 (R,\cR ,\rho)$ where $(R,\cR ,\rho)$  is a probability space s.t.   $L^2 (R,\cR
 ,\rho)$ is infinite dimensional. The gradient $D$ is denoted  by $\flat$:
 \[\forall u\in\bbd,\ Du=u^{\flat} \in L^2 (X\times R ,\cX \otimes \cR
 , m\otimes\rho).\]
  Without loss of generality, we assume moreover that the operator
 $\flat$ takes its values in the orthogonal space of $1$ in $L^2
 (R,\cR ,\rho)$. So that we have
$$\label{231}
 \forall u\in\bbd ,\ \int u^{\flat}d\rho =0\ \ \nu\mbox{-}a.e.
$$

\subsection{ Candidate gradient for the upper space.}

Then, we introduce the creation operator (resp. annihilation operator) which consists in adding (resp. removing if necessary) a jump at time $t$ with size $u$:
\[\begin{array}{l}
\crea_{(t ,u)} (w_1)=w_1{\bf 1}_{\{ (t ,u)\in supp
\, w_1\}}+(w_1+\varepsilon_{(t ,u)}\}) {\bf 1}_{\{ (t ,u)\notin supp \, w_1\}}\\
 \anni_{(t
,u)} (w_1)=w_1{\bf 1}_{\{ (t ,u)\notin supp \,
w_1\}}+(w_1-\varepsilon_{(t ,u)}\}) {\bf 1}_{\{ (t ,u)\in supp \,
w_1\}}.
\end{array}\]
In a natural way, we extend these operators to the functionals by
\[ \crea H(w_1,t ,u)=H(\crea_{(t ,u)} w_1,t, u)\quad\anni H(w_1,t ,u)=
H(\anni_{(t , u)}w_1,t,u).\]
\noindent Definition. {\it  For $F\in\cD_0$, we define the pre-gradient
\[ F^\sharp =\int_0^{+\infty}\int_{X\times R} \anni((\crea F)^{\flat})\, dN\odot\rho ,\]
where $N\odot\rho$ is the point process $N$ ``marked'' by $\rho$} 

\noindent i.e. if $N$ is the family of marked points $(T_i, X_i )$, $N\odot\rho$ is the family  $(T_i, X_i,r_i)$ where the $r_i$ are new independent random variables mutually independent and identically distributed with law $\rho$, defined on an auxiliary probability space $(\hOm, \hcA,\hbbP )$. So $N\odot\rho$ is a Poisson random measure on $[0,+\infty[\times X\times R$.\\

\subsection{ Main result.}

The above candidate may be shown to extend in a true gradient for the upper structure. The argument is based on the extension of the pregenerator  $A_0$ thanks to Friedrichs' property (cf. for instance \cite{bouleau-hirsch2} p. 4) : $A_0$ is shown to be well defined on $\cD_0$ which is dense, $A_0$ is negative and symmetric and therefore possesses a selfadjoint extension. Before stating the main Theorem, let us introduce some notation. We denote by $\sbbD$ the
completion of $\cD_0\otimes L^2 ([0,+\infty[,dt)\otimes \bbd$ with
respect to the norm
\begin{eqnarray*}\| H\|_{\sbbD}\!\!&=&\!\!\left( \E\int_0^{\infty}\!\!\!\int_{ X }\anni(\gamma
[H])(w,t,u)N(dt,du)\right)^{\frac12} \!+\E\int_0^{\infty}\!\!\!\int_X
(\anni|H|)(w,t,u)\eta (t,u)N(dt,du)\\
&=&\!\!\left( \E\int_0^{\infty}\!\!\!\int_{ X }\gamma
[H](w,t,u)\nu(du)dt\right)^{\frac12} \!+\E\int_0^{\infty}\!\!\!\int_X
|H|(w,t,u)\eta (t,u)\nu (du) dt,
\end{eqnarray*} where $\eta$ is a fixed
positive function in $L^2 (\R^+ \times X ,dt\times d\nu)$.\\

\noindent Theorem. { \it The formula
$$\label{31}
\forall F\in\bbD,\ F^\sharp =\int_0^{+\infty}\int_{X\times R} \anni((\crea F )^\flat)\,
dN\odot \rho ,$$ extends from $\cD_0$ to $\mathbb{D}$,  it is justified by the following  decomposition :
$$\hspace{-1.5cm} F\in\mathbb{D}\stackrel{\crea-I}{\mapsto} \varepsilon^+F-F\in\underline{\mathbb{D}}\stackrel{\anni((.)^\flat)}{\mapsto}\anni((\varepsilon^+F)^\flat)\in L^2_0(\mathbb{P}_N\times\rho)\stackrel{d(N\odot\rho)}{\mapsto} F^\sharp\in L^2(\mathbb{P}\times\hat{\mathbb{P}})$$where each operator is continuous on the range of the preceding one 
and where  $L^2_0
(\bbP_N \times \rho )$ is the closed set of elements  $G$ in $L^2
(\bbP_N \times \rho )$ such that $\int_R G d\rho=0$ $\bbP_N$-a.s.

Furthermore for all $F\in\bbD$
\begin{eqnarray}\label{formuleocc}\Gamma [F]=\hE (F^\sharp )^2 =\int_0^{+\infty}\int_X \anni\gamma [\crea F ]\, dN.\end{eqnarray}}

\noindent Let us explicit the steps of a typical calculation applying this theorem.

 Let  $H=\Phi(F_1,\ldots,F_n)$ with $\Phi\in\mathcal{C}^1\cap Lip(\mathbb{R}^n)$ 
and  $F=(F_1,\ldots,F_n)$ with $F_i\in\bbD$, we have :
$$\begin{array}{rrlll}

a)&\gamma[\crea H]&=\sum_{ij}\Phi^\prime_i(\crea F)\Phi^\prime_j(\crea F)\gamma[\crea F_i,\crea F_j]&\mathbb{P}\times\nu\mbox{-a.e.}\\
&&&&\\

b)&\anni\gamma[\crea H]&=\sum_{ij}\Phi^\prime_i(F)\Phi^\prime_j(F)\anni\gamma[\crea F_i,\crea F_j]&\mathbb{P}_N\mbox{-a.e.}\\
&&&&\\

c)&\Gamma[H]=\int\anni\gamma[\crea H]dN&=\sum_{ij}\Phi^\prime_i(F)\Phi^\prime_j(F)\int\anni\gamma[\crea F_i,\crea F_j]dN&\mathbb{P}\mbox{-a.e.}
\end{array}$$
As we see above, a peculiarity of the  method comes from the fact that it involves, in the computation, successively mutually singular measures, 
such as measures $\mathbb{P}_N=\mathbb{P}(d\omega)N(\omega,dx)$ and $\mathbb{P}\times\nu$. This imposes some care in the applications.\\
Let us finally remark that the lent particle formula (\ref{formuleocc}) has been encountered   by some authors  as valid on a space of test functions (see e.g. \cite{privault2} before Prop 8), let us emphasize that in our case, it is  valid on the whole domain $\bbD$, this  is essential to apply the method to SDE's for example.

\section{Applications}

\subsection{ Sup of a stochastic process on $[0,t]$.}

\noindent The fact that the operation of taking the maximum is typically a Lipschitz operation makes it easy to apply the method.

Let $Y$ be a centered  L\'evy process as in \S 2.2. 
Let $K$ be a  c\`adl\`ag process independent of $Y$. 
 We put   $$H_s=Y_s+K_s.$$
 
\noindent Proposition. {\it  If $\sigma(\mathbb{R}\backslash\{0\})=+\infty$ and if $\mathbb{P}[\sup_{s\leq t}H_s=H_0]=0$, the random variable  $\sup_{s\leq t}H_s$ has a density.
}\\

As a consequence,  {\it any L\'evy process starting from zero and immediately entering  $\mathbb{R}_+^\ast$, whose L\'evy measure dominates a measure $\sigma$ satisfying  Hamza condition and infinite, is such that $\sup_{s\leq t}X_s$ has a density.}

Let us recall that the Hamza condition (cf. Fukushima and al.\cite{fukushima-oshima-takeda} Chapter 3) gives a necessary and sufficient condition of existence of a Dirichlet structure on $L^2(\sigma)$. Such a necessary and sufficient condition is only known in dimension one.\\

\subsection{ Regularity without H\"ormander.}

\noindent Consider the following SDE driven by a two dimensional Brownian motion
\begin{equation}\label{diffusion}\left\{\begin{array}{rl}
X^1_t&=z_1+\int_0^tdB^1_s\\
X^2_t&=z_2+\int_0^t2X^1_sdB^1_s+\int_0^tdB^2_s\\
X^3_t&=z_3+\int_0^tX^1_sdB^1_s+2\int_0^tdB^2_s.
\end{array}\right.
\end{equation}
This diffusion is degenerate and the H\"ormander conditions are not fulfilled. The generator is $A=\frac{1}{2}(U_1^2+U_2^2)+V$
and its adjoint $A^\ast=\frac{1}{2}(U_1^2+U_2^2)-V$ with  
$U_1=\frac{\partial}{\partial x_1}+2x_1\frac{\partial}{\partial x_2}+x_1\frac{\partial}{\partial x_3}$,
$U_2=\frac{\partial}{\partial x_2}+2\frac{\partial}{\partial x_3}$ and $V=-\frac{\partial}{\partial z_2}-\frac{1}{2}\frac{\partial}{\partial z_3}$. The Lie brackets of these vectors vanish and the Lie algebra is of dimension 2 :
the diffusion remains on the quadric of equation
$\frac{3}{4}x_1^2-x_2+\frac{1}{2}x_3-\frac{3}{4}t=C.$

\noindent Consider now the same equation driven by a L\'evy process :

$$\left\{\begin{array}{rl}
Z^1_t&=z_1+\int_0^tdY^1_s\\
Z^2_t&=z_2+\int_0^t2Z^1_{s_-}dY^1_s+\int_0^tdY^2_s\\
Z^3_t&=z_3+\int_0^tZ^1_{s_-}dY^1_s+2\int_0^tdY^2_s
\end{array}\right.
$$
under hypotheses on the L\'evy measure  such that the bottom space may be equipped with the carr\'e du champ operator 
$\gamma[f]=y_1^2f^{\prime 2}_1+y_2^2f^{\prime 2}_2$ satisfying the hypotheses yielding EID.
Let us apply in full details the lent particle method.
$$\mbox{ For }\alpha\leq t\qquad\varepsilon^+_{(\alpha,y_1,y_2)}Z_t=Z_t+\left(
\begin{array}{c}
y_1\\
2Y^1_{\alpha-}y_1+2\int_{]\alpha}^ty_1dY^1_s+y_2\\
Y^1_{\alpha-}y_1+\int_{]\alpha}^ty_1dY^1_s+2y_2
\end{array}\right)=Z_t+\left(
\begin{array}{c}
y_1\\
2y_1Y_t^1+y_2\\
y_1Y_t^1+2y_2
\end{array}\right),
$$
where we have used $Y^1_{\alpha-}=Y^1_\alpha$ because $\varepsilon^+$ send into $\mathbb{P}\times \nu$ classes. That gives
$$\gamma[\varepsilon^+Z_t]=\left(
\begin{array}{lcr}
y_1^2&y_1^22Y^1_t&y_1^2Y^1_t\\
id&y_1^24(Y^1_t)^2+y_2^2&y_1^22(Y^1_t)^2+2y_2^2\\
id&id&y_1^2(Y_t^1)^2+4y_2^2
\end{array}
\right)$$
and 
$$\varepsilon^-\gamma[\varepsilon^+Z_t]=\left(
\begin{array}{lcr}
y_1^2&y_1^22(Y^1_t-\Delta Y_\alpha^1)&y_1^2(Y^1_t-\Delta Y_\alpha^1)\\
id&y_1^24(Y^1_t-\Delta Y_\alpha^1)^2+y_2^2&y_1^22(Y^1_t-\Delta Y_\alpha^1)^2+2y_2^2\\
id&id&y_1^2(Y_t^1-\Delta Y_\alpha^1)^2+4y_2^2
\end{array}
\right),$$
where $id$ denotes the symmetry of the matrices. Hence
$$\Gamma[Z_t]=\sum_{\alpha\leq t}(\Delta Y_\alpha^1)^2
\left(
\begin{array}{lcr}
1&2(Y^1_t-\Delta Y_\alpha^1)&(Y^1_t-\Delta Y_\alpha^1)\\
id&4(Y^1_t-\Delta Y_\alpha^1)^2&2(Y^1_t-\Delta Y_\alpha^1)^2\\
id&id&(Y_t^1-\Delta Y_\alpha^1)^2
\end{array}
\right)+(\Delta Y^2_\alpha)^2
\left(
\begin{array}{lcr}
0&0&0\\
0&1&2\\
0&2&4
\end{array}
\right).
$$
With this formula we can reason, trying to find conditions for the determinant of $\Gamma[Z]$ to be positive.
For instance if the L\'evy measures of  $Y^1$ and $Y^2$ are infinite, it follows that $Z_t$ has a density as soon as 
$$
\mbox{dim }\mathcal{L}\left\{\left(
\begin{array}{c}
1\\
2(Y^1_t-\Delta Y_\alpha^1)\\
(Y^1_t-\Delta Y_\alpha^1)
\end{array}\right), \left(\begin{array}{c}
0\\
1\\
2
\end{array}\right)\quad\alpha\in JT\right\}=3.$$
But  $Y^1$ possesses necessarily jumps of different sizes, hence  $Z_t$ has a density on $\mathbb{R}^3$.

It follows that the integro-differential operator
$$\tilde{A}f(z)=
\int\!\left[f(z)-f\!
\left(\begin{array}{c}
z_1+y_1\\
z_2+2z_1y_1+y_2\\
z_3+z_1y_1+2y_2
\end{array}\right)
-(f^\prime_1(z)\;f^\prime_2(z)\;f^\prime_3(z))\left(
\begin{array}{c}
y_1\\
2z_1y_1+y_2\\
z_1y_1+2y_2
\end{array}\right)
\right]\sigma(dy_1dy_2)
$$
is hypoelliptic at order zero, in the sense that its semigroup $P_t$ has a density. No minoration is supposed on the growth of the L\'evy measure near 0 as assumed by many authors. 

This result implies that for any L\'evy process $Y$ satisfying the above hypotheses, even a subordinated one in the sense of Bochner, the process $Z$ is never subordinated of the Markov process $X$ solution of equation (\ref{diffusion}) (otherwise it would live on the same manifold as the initial diffusion).
\section{Application to SDE's driven by a Poisson measure}
\subsection{The equation we study}
We consider another  probability space $(\Om_2,\cA_2 ,\bbP_2)$ on which an $\R^n$-valued semimartingale $Z=(Z^1 ,\cdots,Z^n)$ is defined, $n\in\Ne$. We adopt the following assumption  on the bracket of $Z$ and on the total variation of its finite variation part. It is satisfied if both are dominated by the Lebesgue measure uniformly:\\
 {\underline{Assumption on $Z$}}:
There exists a positive constant $C$ such that  for any square integrable $\R^n$-valued predictable process $h$:
\begin{equation}\label{CondCrochet} \forall t\geq 0 ,\ \E  [(\int_0^t h_s dZ_s )^2]\leq C^2 \E [\int_0^t |h_s|^2 ds].\end{equation}
We shall work on the product probability space:
$(\Om,\cA ,\bbP)=(\Om_1 \times \Om_2 ,\cA_1 \otimes \cA_2 ,\bbP_1
\times \bbP_2 ).$\\
For simplicity, we fix  a finite terminal time $T>0$.\\
Let $d\in\Ne$, we consider the following SDE:
\begin{equation}\label{eq}
X_t =x+\int_0^t \int_X c(s,X_{s^-},u)\tN (ds,du)+\int_0^t \sigma
(s,X_{s^-})dZ_s
\end{equation}
where $x\in\R^d$,  $c:\R^+
\times \R^d \times X\rightarrow\R^d$ and $\sigma:\R^+ \times \R^d \rightarrow\R^{d\times n}$ satisfy the set of hypotheses below denoted (R).\\

\underline{Hypotheses (R):}

\noindent 1. There exists $\eta \in L^2 (X,\nu)$  such that:

a) for all $t\in [0,T]$ and $u\in X$, $c(t,\cdot ,u)$ is
differentiable with continuous derivative and
\[\forall u\in X,\  \sup_{t\in [0,T], x\in\R^d} | D_x c(t,x
,u)|\leq \eta (u),\]
\indent b) $\forall (t,u)\in [0,T]\times X,\ |c(t,0,u)|\leq \eta (u)$,

c)  for all $t\in [0,T]$ and $x\in \R^d$, $c(t,x,\cdot )\in\bbd$
and
\[  \sup_{t\in [0,T], x\in\R^d} \gamma [c(t,x,\cdot)](u)\leq \eta^2 (u)
,\]
\indent d) for all $t\in [0,T] $, all $x\in \R^d$ and $u\in X$,\ the matrix $I+D_x
c(t,x,u)$ is invertible and
\[ \sup_{t\in [0,T], x\in \R^d} \left|\left(I+D_x c(t,x,u)\right)^{-1}\right|\leq \eta
(u).\]
2.  For all $t\in [0,T]$ , $\sigma(t,\cdot )$ is
differentiable with continuous derivative and
\[  \sup_{t\in [0,T], x\in\R^d} | D_x \sigma(t,x)|<+\infty.\]
3. As a consequence of  hypotheses 1. \!and 2. \!above, it is well known that equation \eqref{eq} admits a unique solution $X$ such that
$ \E [\sup_{t\in [0,T]} |X_t |^2 ]<+\infty$.
 We suppose that for all $t\in [0,T]$, the matrix $(I+\sum_{j=1}^n D_x \sigma_{\cdot ,j} (t, X_{t^-})\Delta Z_t^j )$ is invertible and its inverse is bounded by a deterministic constant uniformly with respect to $t\in [0,T]$.\\
 
\noindent {\it Remark:} We have defined a Dirichlet structure $(\bbD ,\cE )$ on  $L^2 (\Om_1 , \bbP_1 )$. Now, we work on the product space, $\Om_1 \times \Om_2$. Using  natural notations, we consider from now on that $(\bbD ,\cE )$ is a Dirichlet structure on $L^2 (\Om, \bbP)$. In fact, it is the product structure of $(\bbD ,\cE)$ with the trivial one on $L^2 (\Om_2 ,\bbP_2 )$ (see \cite{bouleau-hirsch2} ). Of course, all the properties remain true. In other words, we only differentiate w.r.t. the Poisson noise and not w.r.t.  the one introduced by $Z$.\\
\subsection{Spaces of processes and functional calculus}
We denote by $\cP$ the predictable sigma-field on
$[0,T]\times \Omega$ and we define the following sets of processes:
\begin{itemize}
\item $\cH$ : the set of real valued  processes $(X_t )_{t\in [0,T]}$, defined on
$(\Omega ,\cA,\bbP)$, which belong to $L^2 ([0,T]\times \Omega)$.
\item $\LP$ : the set of predictable processes in $\cH$.
\item $\LD$ : the set of real valued  processes $(H_t )_{t\in
[0,T]}$, which belong to $L^2 ([0,T]; \bbD)$ i.e. such that
\[ \| H\|^2_{\LD} =\E [ \int_0^T |H_t |^2 dt]+\int_0^T \cE (H_t )dt
<+\infty.\]
\item $\LDP$ : the subvector space of predictable processes in $\LD$.
\item $\LDdP$ : the set of real valued processes $H$ defined on $[0,T]\times
\Omega \times X$ which are predictable and belong to $L^2
([0,T];\bbD\otimes \bbd)$ i.e. such that
\[ \| H\|^2_{\LDdP} =\E [ \int_0^T \int_X |H_t |^2 \nu(du)dt]
+\int_0^T\int_X \cE (H_t(\cdot ,u)) \nu(du)dt+\E [ \int_0^T
e(H_t)dt] <+\infty.\]
\end{itemize}
The main idea is to differentiate equation \eqref{eq}, to do that we need some functional calculus. It is given by the next Proposition that we prove by approximation:
\begin{Pro}{\label{Estimees} } Let $H\in \LDdP$ and $G\in \LDP^n$, then:
\begin{enumerate}
\item The process
$$ \forall t\in [0,T], \ X_t =\int_0^t\int_X H(s,w,u)\tN (ds,du)$$
is a square integrable martingale which belongs to $\LD$ and such
that the process $X^- =(X_{t^-})_{t\in [0,T]}$ belongs to $\LDP$.
The gradient operator satisfies for all $t\in [0,T]$:
\begin{equation}\label{Derive1}X_t^\sharp (w,\hw)=\int_0^t\int_X H^\sharp
(s,w,u,\hw ) d\tN (ds,du)+\\\int_0^t\int_{X\times R} H^{\flat}
(s,w,u,r)N\odot\rho (ds,du,dr).\end{equation}
\item The process
\[ \forall t\in [0,T], \ Y_t =\int_0^t G(s,w)dZ_s\]
is a square integrable semimartingale which belongs to $\LD$,
$Y^-=(Y_{t^-})_{t\in [0,T]}$ belongs to $\LDP$ and
\begin{equation}\label{Derive2}\forall t\in [0,T],\ Y_t^\sharp (w,\hw)=\int_0^t G^\sharp
(s,w,\hw ) dZ_s.\end{equation}\end{enumerate}
\end{Pro}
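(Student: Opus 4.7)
The plan is to prove both statements by approximation: first verify the formulas on elementary predictable integrands by direct computation from the definition of $\sharp$, then extend by density using the isometry-type bounds that link the two sides of (\ref{Derive1}) and (\ref{Derive2}) to the norms of $\LDdP$ and $\LDP^n$.

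For part 1, I would take $H$ of the elementary form $H(s,w,u)=F(w)\mathbf{1}_{(s_0,s_1]}(s)h(u)$, with $F\in\cD_0$ bounded and $\cF_{s_0}$-measurable and $h\in\bbd$, together with finite linear combinations of such. For such $H$, the stochastic integral is $X_t=F\cdot M_{t\wedge s_1}$ where $M_\cdot=\int_{s_0}^{\cdot}\int_X h(u)\tN(ds,du)$ is of the form covered directly by the Theorem, and $F$ is independent of the increments of $N$ on $(s_0,s_1]$. Computing $\crea_{(\alpha,v)}X_t$, then $(\crea X_t)^\flat$, then $\anni(\cdot)$, and integrating against $N\odot\rho$ produces exactly the two terms of (\ref{Derive1}): the contribution $F^\sharp\cdot M_{t\wedge s_1}$ coincides with $\int H^\sharp d\tN$, while the contribution $F\cdot\int\int h^\flat\,d(N\odot\rho)=F\cdot M^\sharp$ coincides with $\int H^\flat d(N\odot\rho)$.

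The density step rests on a two-sided $L^2$ estimate. By the It\^o isometry for $\tN$, $\E|X_t|^2=\E\int_0^t\int_X|H|^2\nu(du)ds$, which controls the $L^2([0,T]\times\Omega)$ part of the $\LD$-norm. For the energy part, the lent particle formula (\ref{formuleocc}) combined with Fubini yields
\[ 2\cE(X_t)=\E\hE(X_t^\sharp)^2 \leq C\Bigl(\int_0^t\int_X \cE(H(s,\cdot,u))\,\nu(du)ds + \E\int_0^t e(H(s,w,\cdot))\,ds\Bigr), \]
the cross term $2\E\hE\bigl[(\int H^\sharp d\tN)(\int H^\flat d(N\odot\rho))\bigr]$ in the expansion of $(X_t^\sharp)^2$ vanishing because $\int_R H^\flat d\rho=0$ $\bbP_N$-a.s.; the right-hand side is a multiple of $\|H\|^2_{\LDdP}$. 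Applied to $H-H_n$ for a sequence $H_n$ of elementary integrands dense in $\LDdP$, this forces $X^{(n)}\to X$ in $\LD$ and $(X^{(n)})^\sharp$ to converge in $L^2(\bbP\times\hbbP)$ to the right-hand side of (\ref{Derive1}); closedness of $\sharp$ then identifies the limit with $X_t^\sharp$, proving the formula on all of $\LDdP$. The c\`adl\`ag square-integrable martingale property, and the fact that $X^-\in\LDP$, follow by Doob's inequality together with the same bound applied to increments $X_t-X_{t'}$.

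Part 2 is analogous but simpler because $G$ does not depend on the bottom variable $u$, and because $Z$ lives on the independent factor $\Omega_2$ on which $\sharp$ acts trivially, so no ``$\int G^\flat d(N\odot\rho)$'' term appears in (\ref{Derive2}). For elementary $G(s,w)=F(w)\mathbf{1}_{(s_0,s_1]}(s)$ with $F\in\cD_0$ bounded and $\cF_{s_0}$-measurable, one directly obtains $Y_t^\sharp=F^\sharp\cdot(Z_{t\wedge s_1}-Z_{s_0\wedge(t\wedge s_1)})=\int_0^t G^\sharp dZ_s$. The density argument now invokes the hypothesis (\ref{CondCrochet}) on $Z$, which gives $\E|\int_0^t G^\sharp dZ_s|^2\leq C^2\E\int_0^t|G^\sharp|^2 ds$; taking $\hE$ and applying Fubini controls $2\cE(Y_t)$ by $2C^2\int_0^t\cE(G(s,\cdot))ds$, hence by $C^2\|G\|^2_{\LDP}$. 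The main technical obstacle throughout is not the elementary case but the density claim itself for the full $\LDdP$- (resp.\ $\LDP$-) norm involving both $\cE$ and $e$: this is handled by separate approximation in each variable, regularizing $F$ inside $\cD_0$, $h$ inside $\bbd$, and the time-dependence by step functions on a partition refining the grid on which predictability is controlled, combined with a truncation argument to keep the tensors bounded.
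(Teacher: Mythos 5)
Your proposal is correct and follows essentially the route the paper itself takes: the paper gives no details for this Proposition, saying only that it is proved ``by approximation'' (with the full argument deferred to \cite{bouleau-denis2}), and your scheme --- verify (\ref{Derive1})--(\ref{Derive2}) on elementary predictable product integrands via the product rule for $\sharp$ and the lent particle computation, then extend by density in $\LDdP$ (resp. $\LDP^n$) using the $L^2$/energy bounds and the closedness of the gradient --- is exactly that argument. One small point to make explicit: the vanishing of the cross term uses, besides $\int_R H^{\flat}\,d\rho=0$, the predictability of $H^{\sharp}$ and $H^{\flat}$, so that both terms of (\ref{Derive1}) can be viewed as integrals of predictable integrands against the compensated marked measure $N\odot\rho$ and the It\^o isometry then yields zero covariance.
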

\subsection{Computation of the Carr\'e du champ matrix of the solution}
Applying the standard functional calculus related to Dirichlet forms, the previous Proposition and  a Picard iteration argument, we obtain:
\begin{Pro} The equation {\rm(\ref{eq})} admits a unique solution $X$ in
$\LD^d$. Moreover, the gradient of $X$ satisfies:
\begin{eqnarray*}
X_t^\sharp &=&\int_0^t\int_X D_x c(s,X_{s-},u)\cdot
X^\sharp_{s-}\tN (ds,du)\\&&+\int_0^t\int_{X\times R}
c^{\flat}(s,X_{s-},u,r)N\odot\rho (ds,du,dr)\\&&+\int_0^t D_x
\sigma (s,X_{s-})\cdot X^\sharp_{s-} dZ_s .
\end{eqnarray*}
\end{Pro}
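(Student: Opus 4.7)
The plan is to run a Picard iteration in $\LD^d$, using Proposition \ref{Estimees} termwise on the iterates, and then pass to the limit to obtain both existence in $\LD^d$ and the announced formula for $X^\sharp$. Concretely, set $X^{(0)}_t\equiv x$ and, having $X^{(n)}\in\LD^d$ with left-continuous version in $\LDP^d$, define
\[ X^{(n+1)}_t = x+\int_0^t\!\!\int_X c(s,X^{(n)}_{s^-},u)\,\tN(ds,du)+\int_0^t \sigma(s,X^{(n)}_{s^-})\,dZ_s.\]
The first thing to check is that the integrands land in the right functional spaces. By hypothesis 1(a)--(c), $u\mapsto c(s,X^{(n)}_{s^-}(\omega),u)$ is in $\bbd$ with $\gamma$-norm dominated by $\eta^2$, and by the chain rule (applied to the bottom and upper gradients separately) together with the bound on $D_x c$, the process $(s,\omega,u)\mapsto c(s,X^{(n)}_{s^-},u)$ belongs to $\LDdP$; similarly $\sigma(s,X^{(n)}_{s^-})$ belongs to $\LDP^n$ since $D_x\sigma$ is bounded.

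Next, I would apply Proposition \ref{Estimees} to each iterate. Part~1 gives, using $H(s,\omega,u)=c(s,X^{(n)}_{s^-},u)$,
\[ H^\sharp(s,\omega,u,\hw)=D_xc(s,X^{(n)}_{s^-},u)\cdot (X^{(n)}_{s^-})^\sharp(\omega,\hw),\quad H^\flat(s,\omega,u,r)=c^\flat(s,X^{(n)}_{s^-},u,r),\]
and Part~2 gives $G^\sharp(s,\omega,\hw)=D_x\sigma(s,X^{(n)}_{s^-})\cdot(X^{(n)}_{s^-})^\sharp(\omega,\hw)$. Substituting, $(X^{(n+1)})^\sharp$ satisfies an SDE of exactly the announced form with $X$ replaced by $X^{(n)}$ in the drivers and by $X^{(n+1)}$ in the unknown.

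The central estimate to run is the contraction in the norm $\|\cdot\|_{\LD^d}$, which controls simultaneously $\E\sup|X|^2$ and $\E\int_0^T\Gamma[X_t]\,dt$. Using the $L^2$-isometry for $\tN$, hypothesis \eqref{CondCrochet} for $dZ$, the Lipschitz bound furnished by $\eta\in L^2(\nu)$ and the bound on $D_x\sigma$, one controls
\[\E\sup_{t\leq T}|X^{(n+1)}_t-X^{(n)}_t|^2\leq K\int_0^T\E|X^{(n)}_{s^-}-X^{(n-1)}_{s^-}|^2\,ds,\]
and an analogous estimate for the gradient part, where the crucial point is that $D_xc$ and $D_x\sigma$ appear as bounded linear coefficients so that the linearized equation for $(X^{(n+1)})^\sharp-(X^{(n)})^\sharp$ is controlled in the same way. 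Iterating and summing yields a Cauchy sequence in $\LD^d$, whose limit $X$ solves \eqref{eq} and whose gradient $X^\sharp$ satisfies the formula stated. Uniqueness follows from the same contraction applied to the difference of two solutions, and the chain-rule identifications above give the closed-form expression for $X^\sharp$.

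The main obstacle is the joint control of the state and its gradient in the $\LD$ norm: one must avoid circularity by treating $X^\sharp$ as an auxiliary unknown that solves a \emph{linear} SDE with random coefficients $D_xc(s,X_{s^-},u)$ and $D_x\sigma(s,X_{s^-})$ plus an inhomogeneous Poisson term $c^\flat(s,X_{s^-},u,r)$. The estimate \eqref{CondCrochet} is essential here because the $dZ$-driver of this linear SDE is not a martingale but has quadratic variation dominated by Lebesgue measure; combined with Gronwall this closes the loop and gives the claimed convergence.
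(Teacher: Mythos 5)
Your proposal is correct and follows essentially the route the paper itself indicates: the paper derives this Proposition by combining the standard functional calculus for Dirichlet forms with Proposition \ref{Estimees} applied to the integrands and a Picard iteration, which is exactly your scheme of iterating, differentiating each iterate via Proposition \ref{Estimees} (with $H^\sharp=D_xc\cdot X^\sharp_{s^-}$, $H^\flat=c^\flat$, $G^\sharp=D_x\sigma\cdot X^\sharp_{s^-}$), and closing the contraction in the $\LD^d$ norm using hypotheses (R) and \eqref{CondCrochet}. No gap to report.
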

Let us define the  $\R^{d\times d}$-valued processes $U$ by
$$dU_s=\sum_{j=1}^nD_x\sigma_{.,j}(s,X_{s-})dZ_s^j,$$
and the derivative of the flow generated by $X$:
\begin{eqnarray*}
K_t &=& I+\int_0^t\int_X D_x c(s,X_{s-} ,u)K_{s-} \tN (ds ,du)
+\int_0^t dU_sK_{s-}.
\end{eqnarray*}
\begin{Pro}
 Under our
hypotheses, for all $t\geq 0$, the matrix $K_t$ is invertible and its inverse
 $\bK_t =(K_t )^{-1}$ satisfies:
\begin{eqnarray*}
\bK_t &=& I -\int_0^t\int_X \bK_{s-} (I+D_x c(s,X_{s-} ,u))^{-1} {
D_x c(s,X_{s-} ,u)}\tN (ds ,du)
\\&&-\int_0^t \bK_{s-} dU_s+\sum_{s\leq t}\bK_{s-}(\Delta U_s)^2(I+\Delta U_s)^{-1}\\&&+\int_0^t\bK_s d<U^c,U^c>_s.
\end{eqnarray*}
\end{Pro}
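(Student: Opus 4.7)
The plan is to take the displayed SDE as the definition of a candidate process $\bar K$, to establish its well-posedness, and then to verify directly by the It\^o product formula that $\bar K_t K_t \equiv I$; this simultaneously yields the invertibility of $K_t$ and the identification $\bar K_t = K_t^{-1}$, so invertibility need not be treated as a separate step. Existence and uniqueness of $\bar K \in \LD^{d\times d}$ solving the SDE follows from a Picard iteration of the same kind already used to solve the equation for $K$: hypothesis~(R)\,1(d) gives $|(I+D_x c)^{-1}| \leq \eta(u)$, so that $(I+D_x c)^{-1} D_x c$ has the same $L^2(\nu)$ control as $D_x c$ itself; hypothesis~(R)\,3 supplies a deterministic bound on $(I+\Delta U_s)^{-1}$; and $\sum_{s\leq t} |\Delta U_s|^2$ is controlled by $[Z,Z]_t$, which guarantees that the pure-jump sum $\sum_{s\leq t}\bar K_{s-}(\Delta U_s)^2(I+\Delta U_s)^{-1}$ converges. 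Proposition~\ref{Estimees} then delivers the required estimates.

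The heart of the proof is the It\^o identity
\[d(\bar K_t K_t) = d\bar K_t\, K_{t-} + \bar K_{t-}\, dK_t + d[\bar K, K]_t,\]
which I would show vanishes by separating contributions from Poisson jumps, jumps of $Z$, and the continuous direction. At a Poisson jump at $(s,u)$, reading off $\Delta \bar K_s = -\bar K_{s-}(I+D_x c)^{-1} D_x c$ and $\Delta K_s = D_x c\, K_{s-}$ and using the commutation of $D_x c$ with $(I+D_x c)^{-1}$, one obtains $(I - (I+D_x c)^{-1} D_x c)(I+D_x c) = I$, hence $\bar K_s = \bar K_{s-}(I+D_x c)^{-1}$ and $\bar K_s K_s = \bar K_{s-} K_{s-}$. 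At a jump of $Z$ the analogous computation, based on the identity $-\Delta U_s + (\Delta U_s)^2(I+\Delta U_s)^{-1} = -\Delta U_s(I+\Delta U_s)^{-1}$, gives $\bar K_s = \bar K_{s-}(I+\Delta U_s)^{-1}$. In the continuous direction the It\^o bracket evaluates to $d[\bar K^c, K^c] = -\bar K_{t-}\,d\langle U^c, U^c\rangle\, K_{t-}$, and this is cancelled precisely by the drift $+\int \bar K_s\,d\langle U^c, U^c\rangle_s$ built into the SDE for $\bar K$; the $\pm dU^c$ contributions cancel each other. Combining the three cancellations with $\bar K_0 K_0 = I$ yields $\bar K_t K_t \equiv I$.

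The delicate point will be the bookkeeping of the dual-predictable compensators carried by the $\tN$-integrals: each such integral contributes both a pure-jump martingale (at the jump times of $N$) and a continuous finite-variation drift (minus the integrand against $\nu(du)\,ds$), and the compensator drifts appearing in $d\bar K + dK$ must exactly match the finite-variation drift produced by the jump part of the bracket $[\bar K, K]$ at Poisson jumps. The algebraic identity $D_x c - (I+D_x c)^{-1} D_x c = (I+D_x c)^{-1}(D_x c)^2$, together with its analogue for $\Delta U_s$, is what makes these cancellations go through, and it explains the presence (and the precise form) of the correction terms $\sum_{s\leq t}\bar K_{s-}(\Delta U_s)^2(I+\Delta U_s)^{-1}$ and $\int_0^t \bar K_s\,d\langle U^c, U^c\rangle_s$ in the statement.
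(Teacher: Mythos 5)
Your overall strategy --- take the displayed SDE as the definition of $\bK$, solve it by Picard iteration under (R), and verify $\bK_t K_t\equiv I$ by the It\^o product formula --- is the natural one (the survey gives no proof of this Proposition, deferring to \cite{bouleau-denis2}), and your jump algebra is correct: at a Poisson point $\bK_s=\bK_{s-}(I+D_xc)^{-1}$, at a jump of $Z$ one has $\bK_s=\bK_{s-}(I+\Delta U_s)^{-1}$, so the product $\bK K$ has no jumps, and the $dU$, $(\Delta U)^2(I+\Delta U)^{-1}$ and $\langle U^c,U^c\rangle$ contributions cancel as you say. The gap sits exactly at the point you flag as ``delicate'' and then assert instead of carrying out: the compensator bookkeeping in the Poisson direction does \emph{not} close for the equation exactly as displayed. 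Writing $A=D_xc(s,X_{s-},u)$, the $\tN$-martingale parts of $d\bK\,K_{-}+\bK_{-}\,dK$ combine, via your identity $A-(I+A)^{-1}A=(I+A)^{-1}A^2$, into $\int \bK_{s-}(I+A)^{-1}A^2K_{s-}\,\tN(ds,du)$, whereas the Poisson-jump part of $[\bK,K]$ is the \emph{uncompensated} sum $-\int \bK_{s-}(I+A)^{-1}A^2K_{s-}\,N(ds,du)$; their sum is not $0$ but the predictable drift
\[ \bK_tK_t-I\;=\;-\int_0^t\!\!\int_X \bK_{s-}\bigl(I+D_xc(s,X_{s-},u)\bigr)^{-1}\bigl(D_xc(s,X_{s-},u)\bigr)^2K_{s-}\,\nu(du)\,ds,\]
so your conclusion $\bK_tK_t\equiv I$ is false for the stated SDE. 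A scalar sanity check: $c(s,x,u)=\beta x$, $\nu$ a probability measure, no $Z$; then $K_t=e^{-\beta t}(1+\beta)^{N_t}$, while the displayed equation has solution $e^{\beta t/(1+\beta)}(1+\beta)^{-N_t}\neq K_t^{-1}=e^{\beta t}(1+\beta)^{-N_t}$.

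The cure is that the equation for $\bK$ must carry the additional predictable term $+\int_0^t\int_X\bK_{s-}(I+D_xc)^{-1}(D_xc)^2\,\nu(du)\,ds$ --- equivalently, write the Poisson correction uncompensated and in exact parallel with the $Z$-jump term, i.e.\ replace the first integral by $-\int\bK_{s-}D_xc\,\tN(ds,du)+\int\bK_{s-}(D_xc)^2(I+D_xc)^{-1}\,N(ds,du)$. This extra term is well defined under (R), since $(I+A)^{-1}A=I-(I+A)^{-1}$ gives $\bigl|(I+A)^{-1}A^2\bigr|\leq\min(1+\eta,\eta^2)\,\eta\leq 2\eta^2\in L^1(\nu)$. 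With it included, your verification closes essentially verbatim (your Picard step and the treatment of the $Z$-part and of the continuous bracket are fine, and the absence of common jumps of $N$ and $Z$ follows from independence); without it, the step where you claim the compensator drifts ``exactly match'' is precisely the step that fails --- the identity you invoke makes the \emph{martingale} parts cancel (equivalently, kills the jumps of $\bK K$), but it cannot produce the missing $\nu(du)\,ds$ drift. The formula as printed in this survey is thus an incomplete transcription of the one proved in \cite{bouleau-denis2}, and your write-up should either restore the missing term or record explicitly that the displayed identity cannot hold as stated.
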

We are now able to calculate the carr\'e du champ matrix. This is the aim of the next Theorem, to show how simple is the {\it lent particle method} we give a sketch of the proof.
\begin{Th}{\label{OCC}}
For all $t\in [0,T]$,
\begin{eqnarray*}
\Gamma [X_t ]&=&K_t  \int_0^t \int_X\bK_{s} \gamma[c(s
,X_{s-} ,\cdot )]\bK_{s}^{\ast}\, N (ds, du)
K_t^{\ast}.
\end{eqnarray*}
\end{Th}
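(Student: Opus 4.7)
The plan is to apply the lent particle formula from the main theorem of Section 3,
\[\Gamma[X_t]=\int_0^t\!\!\int_X \anni\gamma[\crea X_t]\,dN,\]
and compute the integrand explicitly.

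First, under $\crea_{(\alpha,u)}$ we add a point $(\alpha,u)$ to $N$, so equation \eqref{eq} picks up an extra Dirac forcing of size $c(\alpha,X_{\alpha-},u)$ at time $\alpha$ and, for $t\geq\alpha$, the perturbed solution $\crea X_t$ evolves under the correspondingly perturbed SDE. Second, I take the bottom gradient $\flat$ in the variable $u$: since $X_{\alpha-}$ does not depend on $u$, the only source of $u$-dependence is the added jump, and by the chain rule together with Proposition \ref{Estimees} the derivative $(\crea X_t)^\flat$ satisfies on $[\alpha,t]$ the linearized SDE with initial datum $c(\alpha,X_{\alpha-},u)^\flat$ at time $\alpha$. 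This gives
\[(\crea X_t)^\flat=K^+_t\,\bK^+_\alpha\,c(\alpha,X_{\alpha-},u)^\flat\qquad\text{for }t\geq\alpha,\]
where $K^+$ denotes the derivative-of-flow matrix associated with the perturbed SDE. Third, squaring (integrating over the auxiliary $R$-space) yields the matrix $\gamma[\crea X_t]=K^+_t\,\bK^+_\alpha\,\gamma[c(\alpha,X_{\alpha-},\cdot)](u)\,(\bK^+_\alpha)^\ast\,(K^+_t)^\ast$. Fourth, I apply $\anni$, which undoes the creation: when the integrand is evaluated at a jump $(s,u_s)$ of $N$, the composition $\anni\circ\crea$ returns the original configuration, so $K^+_t$ reverts to $K_t$ and $\bK^+_\alpha$ reverts to $\bK_s$, the original inverse flow just after the jump at $s$. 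Fifth, I integrate against $N(ds,du)$ and pull the $(s,u)$-independent factors $K_t$ and $K_t^\ast$ out of the integral to recover the stated formula.

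The main obstacle is the rigorous justification of step two, i.e.\ commuting $\flat$ with the stochastic integrals that define $X$. This is exactly what Proposition \ref{Estimees} and the subsequent proposition giving $X^\sharp$ provide, both established by Picard iteration on the domain $\bbD$. An equivalent, possibly more transparent route is to start from the variation-of-constants expression $X^\sharp_t=K_t\int_0^t\!\int_{X\times R}\bK_s\,c^\flat(s,X_{s-},u,r)\,N\odot\rho(ds,du,dr)$ derived from the gradient SDE, and then compute $\Gamma[X_t]=\hE[X^\sharp_t(X^\sharp_t)^\ast]$: using $\int_R c^\flat\,d\rho=0$, the cross-jump terms vanish by independence of the marks $r$, while the diagonal terms integrate $\bK_s\,\gamma[c(s,X_{s-},\cdot)]\,\bK_s^\ast$ against $N$, yielding the same formula. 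Care is needed throughout because the manipulation mixes the mutually singular measures $\bbP\times\nu$ (where the chain rule is applied) and $\bbP_N$ (against which one finally integrates), as the paper stressed just after the main theorem of Section 3.
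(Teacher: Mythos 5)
Your proposal follows essentially the same route as the paper's proof: create the particle, observe that the perturbed solution satisfies the SDE with the extra term $c(\alpha,X_{\alpha^-},u)$, linearize via the perturbed flow $K^{(\alpha,u)}$ and its inverse to get $(\crea X_t)^\flat=K^{(\alpha,u)}_t\bK^{(\alpha,u)}_\alpha\,(c(\alpha,X_{\alpha^-},u))^\flat$, compute $\gamma$, take back the particle, and integrate against $N$. The alternative sketch via $X^\sharp_t$ and $\hE[X^\sharp_t(X^\sharp_t)^\ast]$ with the centering $\int_R c^\flat d\rho=0$ is a reasonable variant, but your main argument is the paper's argument and is correct.
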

\begin{proof}
Let $(\alpha ,u)\in [0,T]\times X$. We put
$ X_t\alu =\crea_{(\alpha ,u)} X_t.$
\begin{eqnarray*}
X_t\alu &=&x+\int_0^{\alpha} \int_X c(s,X_{s^-}\alu,u')\tN
(ds,du')\\&&+\int_0^{\alpha} \sigma(s,X_{s^-}\alu) dZ_s+c(\alpha
,X_{\alpha^-}\alu ,u)\\&&+\int_{]\alpha, t]} \int_X
c(s,X_{s^-}\alu,u')\tN (ds,du')+\int_{]\alpha, t]} \sigma
(s,X_{s^-}\alu)dZ_s.
\end{eqnarray*}
Let us remark that $X_t\alu =X_t $ if $t<\alpha$ so that, taking
the gradient with respect to the variable $u$, we obtain:
\begin{eqnarray*}
(X_t\alu )^\flat&=&(c (\alpha ,X_{\alpha^-}\alu ,u))^\flat
\\&&+\int_{]\alpha, t]} \int_X D_x c(s,X_{s^-}\alu,u')\cdot
(\xs\alu)^\flat \tN (ds,du')\\&&+\int_{]\alpha, t]}
D_x\sigma(s,X_{s^-}\alu)\cdot (\xs\alu)^\flat dZ_s.
\end{eqnarray*}
 Let us now introduce
the process $K_t\alu = \crea_{(\alpha ,u)}(K_t)$ which satisfies
the following SDE:
$$
K_t\alu  = I+\int_0^t\int_X D_x c(s,\xs\alu ,u') K_{s-}\alu \tN (ds ,du')
+\int_0^t dU_s^{(\alpha,u)}K_{s-}^{(\alpha,u)}
$$
and its inverse $\bK_t\alu = (K_t\alu)^{-1}$.
Then, using the flow property,  we have:
\[\forall t\geq 0 ,\ (X_t\alu )^\flat = K_t\alu  \bK_{\alpha}\alu
(c(\alpha ,X_{\alpha^-} ,u))^\flat .\]
Now, we calculate the carr\'e
du champ and then we take back the particle:
\[\forall t\geq 0 ,\ \anni_{(\alpha ,u)}\gamma [ (X_t\alu )] = K_t  \bK_{\alpha}
\gamma[c(\alpha ,X_{\alpha^-} ,\cdot )]
\bK_{\alpha}^{\ast} K_t^{\ast}.\]
Finally integrating with respect to $N$ we get
\begin{eqnarray*}
\forall t\geq 0 ,\ \Gamma [X_t]& =& K_t  \int_0^t
\int_X\bK_{s} \gamma[c(s ,X_{s^-} ,\cdot )](u)
\bK_{s}^{\ast}N (ds, du) K_t^{\ast}.\end{eqnarray*}
\end{proof}
\subsection{First application: the regular case}
An immediate consequence of the previous Theorem is: 
 \begin{Pro} Assume that X is a topological space, that the intensity measure $ds\times\nu$ of $N$ is such that $\nu$ has an infinite mass near some point $u_0$ in $X$. If the matrix $(s,y,u)\rightarrow\gamma[c(s,y,\cdot)](u)$ is continuous on a neighborhood of $(0,x,u_0)$ and invertible at $(0,x,u_0)$, then the solution $X_t$ of {\rm(\ref{eq})} has a density for all $t\in ]0,T]$.
 \end{Pro}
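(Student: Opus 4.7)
The plan is to deduce the result from the explicit formula for $\Gamma[X_t]$ given in Theorem \ref{OCC} together with the (EID) property transported from the bottom space to the upper space. Since we are working under the hypotheses that ensure (EID) holds, the existence of a density for $X_t$ will follow as soon as we establish that $\det \Gamma[X_t] > 0$ $\bbP$-a.s. The conclusion of Theorem \ref{OCC} reads
\[\Gamma[X_t] = K_t \int_0^t\!\int_X \bK_s\, \gamma[c(s,X_{s^-},\cdot)](u)\, \bK_s^\ast\, N(ds,du)\, K_t^\ast,\]
and by Hypothesis (R).3 together with the preceding proposition, $K_t$ is $\bbP$-a.s. invertible. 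Hence it suffices to prove that the middle integral is $\bbP$-a.s. a positive definite matrix.

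First, I would observe that the integrand $\bK_s\, \gamma[c(s,X_{s^-},\cdot)](u)\, \bK_s^\ast$ is symmetric and non-negative for every $(s,u)$, so the integral (a finite sum over jump times of $N$ in $[0,t]\times X$) is a sum of symmetric non-negative matrices. Positive definiteness will follow if at least one of these summands is a positive definite matrix. Hence I would look for a jump $(s,u)$ of $N$ with $s$ small and $u$ close to $u_0$ where both $\bK_s$ is close to the identity and $\gamma[c(s,X_{s^-},\cdot)](u)$ is close to $\gamma[c(0,x,\cdot)](u_0)$, which is invertible by hypothesis.

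Next I would exploit the infinite mass of $\nu$ near $u_0$. For every neighborhood $V$ of $u_0$ on which $\gamma[c(s,y,\cdot)](u)$ stays within a given distance of its value at $(0,x,u_0)$, one has $\nu(V) = \infty$. Combined with the fact that $X_{s^-} \to x$ $\bbP$-a.s. as $s\to 0^+$ (by right continuity at $0$ of the solution) and $\bK_s \to I$ $\bbP$-a.s. as $s\to 0^+$, I can choose, for $\bbP$-almost every $\omega$ and every small enough $\delta>0$, a neighborhood $V_\delta$ of $u_0$ such that for all $s\in(0,\delta]$ and $u\in V_\delta$, the matrix $\bK_s(\omega)\, \gamma[c(s,X_{s^-}(\omega),\cdot)](u)\, \bK_s(\omega)^\ast$ is within operator norm $\varepsilon$ of the invertible matrix $\gamma[c(0,x,\cdot)](u_0)$, hence is itself invertible (and positive definite) for $\varepsilon$ small enough. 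Since $\nu(V_\delta)=\infty$, the Poisson measure $N$ has $\bbP$-a.s. at least one (in fact infinitely many) atom in $(0,\delta]\times V_\delta$, which yields at least one positive definite summand in the sum defining the middle integral.

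The main obstacle is the interplay between the almost-sure right-continuity $X_{s^-}\to x$, $\bK_s\to I$ as $s\to 0^+$ and the continuity hypothesis on $\gamma[c(\cdot,\cdot,\cdot)](\cdot)$: I need a uniform version so that a single random $\delta=\delta(\omega)>0$ controls simultaneously the $s$-variation of $X_{s^-}$, of $\bK_s$ and of the matrix $\gamma[c(s,X_{s^-},\cdot)](u)$ jointly with the $u$-variation. This is a routine but careful argument using the local boundedness and continuity of the ingredients. Once this is done, $\det\Gamma[X_t]>0$ $\bbP$-a.s.\ for every $t\in\,]0,T]$, and (EID) on the upper space then produces the desired density.
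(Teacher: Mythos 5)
Your argument is correct and follows exactly the route the paper intends: the Proposition is stated there as an immediate consequence of Theorem \ref{OCC}, and your proof fills in precisely that deduction (a.s.\ invertibility of $K_t$, a positive definite summand of the $N$-integral coming from an atom with $s$ small and $u$ near $u_0$, guaranteed by $\nu$ having infinite mass near $u_0$ together with the continuity hypothesis and right-continuity of $X_{s^-}$ and $\bK_s$ at $0$, then (EID) on the upper space). One small simplification: you never need $\bK_s$ to be close to $I$, only invertible --- which it is for all $s$ --- since $\bK_s M \bK_s^{\ast}$ is positive definite whenever $M$ is, so the uniformity issue you flag concerns only $X_{s^-}\to x$ and the continuity of $(s,y,u)\mapsto\gamma[c(s,y,\cdot)](u)$.
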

\subsection{Application to SDE's driven by a L\'evy process}
Let  $Y$ be a L\'evy process with values in $\mathbb{R}^d$, independent of another variable $X_0$.\\
We consider the following equation
\[ X_t =X_0 +\int_0^t a (X_{s-},s)\; dY_s ,\ \ t\geq 0\]
where $a: \R^k \times \R^+ \rightarrow \R^{k\times d}$ is a given map.
\begin{Pro} We assume that:
\begin{enumerate}
\item  The L\'evy measure, $\nu$, of $Y$ satisfies hypotheses of the example given in Section \ref{example} with $\nu (O)=+\infty$ and $\xi_{i,j}(x)=x_i \delta_{i,j}$. Then we may choose the operator $\gamma$ to be
$$\gamma[f]=\frac{\psi (x)}{k(x)}\sum_{i=1}^d x_i^2\sum_{i=1}^d(\partial_if)^2\quad \mbox{for }f\in\mathcal{C}^{\infty}_0(\mathbb{R}^d).$$
\item $a$ is $\mathcal{C}^1\cap Lip$ with respect to the first variable uniformly in $s$ and
$$\sup_{t,x}|(I+D_xa\cdot u)^{-1}(x,t)|\leq \eta (u),$$where $\eta \in L^2 (\nu )$.
\item $a$ is continuous with respect to the second variable at 0, and such that the matrix $aa^\ast(X_0,0)$ is invertible;
\end{enumerate}
then for all $t>0$ the law of $X_t$ is absolutely continuous w.r.t. the Lebesgue measure.
\end{Pro}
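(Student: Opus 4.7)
The plan is to apply Theorem~\ref{OCC} to the SDE, with jump coefficient $c(s,x,u)=a(x,s)u$ while the continuous semimartingale part of $Y$ plays the role of $Z$ (trivially if $Y$ is purely discontinuous). This yields
\[\Gamma[X_t] \;=\; K_t\!\left(\int_0^t\!\!\int_X \bK_s\,\gamma[c(s,X_{s-},\cdot)](u)\,\bK_s^{\ast}\,N(ds,du)\right)K_t^{\ast},\]
and from the explicit form of $\gamma$ supplied by hypothesis~1 one computes
\[\gamma[c(s,x,\cdot)](u) \;=\; \frac{\psi(u)}{k(u)}\,a(x,s)\,D_u^{\,2}\,a(x,s)^{\ast},\qquad D_u=\mathrm{diag}(u_1,\dots,u_d).\]
Since the bottom structure satisfies (EID) by Lemma~\ref{lemmed}, the transfer recalled in Section~2.3 provides (EID) on the upper space, so it suffices to establish $\det\Gamma[X_t]>0$ $\bbP$-a.s.

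Because the flow $K_t$ is $\bbP$-a.s. invertible (by the Proposition on $K,\bK$ preceding Theorem~\ref{OCC}), we reduce to showing that the inner integral $M_t$ is $\bbP$-a.s. positive definite. This $M_t$ is a sum over the jump times $\alpha\leq t$ of $Y$ of the positive semi-definite matrices
\[S_\alpha \;:=\; \frac{\psi(\Delta Y_\alpha)}{k(\Delta Y_\alpha)}\,\bK_\alpha\,a(X_{\alpha-},\alpha)\,D_{\Delta Y_\alpha}^{\,2}\,a(X_{\alpha-},\alpha)^{\ast}\,\bK_\alpha^{\ast},\]
and a single $S_\alpha$ of full rank $k$ already forces $M_t$ to be positive definite. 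The rank condition translates into: $\Delta Y_\alpha\in O$ has no zero component, $a(X_{\alpha-},\alpha)$ has full row rank $k$, and $\bK_\alpha$ is invertible. It thus suffices to exhibit, $\bbP$-a.s., one such jump time.

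I produce such an $\alpha$ by localizing at $s=0$. Hypothesis~3 gives $a(X_0,0)$ full row rank; continuity of $a$ in the second variable at~$0$, right-continuity of $X$ and c\`adl\`ag behaviour of $\bK$ (with $\bK_0=I$) then furnish a random time $\tau>0$ on which $a(X_{s-},s)$ has rank $k$ and $\bK_s$ is invertible. Since $\nu=k\,dx$ is absolutely continuous with respect to Lebesgue measure and $\nu(O)=+\infty$, $\nu\bigl(O\cap\{u:u_i\neq 0\ \forall i\}\bigr)=+\infty$; hence $N$ has, $\bbP$-a.s., infinitely many points in $(0,1/n]\times(O\cap\{u:u_i\neq 0\})$ for every integer $n$, and choosing $n$ with $1/n<\tau$ provides the desired $\alpha\leq\tau$ realizing all three conditions. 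The main obstacle is the interplay between the random interval $[0,\tau]$ and the Poisson occurrences on it; this is resolved by the countable-intersection argument just described, which decouples the a.s. Poisson event "infinitely many good jumps on every dyadic interval" from the actual value of $\tau$.
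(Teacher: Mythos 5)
Your proposal is correct and follows essentially the same route as the paper: apply Theorem~\ref{OCC} with $c(s,x,u)=a(x,s)u$, reduce positive-definiteness of $\Gamma[X_t]$ (via invertibility of $K_t$) to the existence of a single non-degenerate jump near $s=0$, and conclude by (EID). The paper only sketches this argument (in dimension one, ending with ``it is easy to conclude''), and your localization at $s=0$ together with the countable-intersection Poisson argument supplies exactly the details the paper omits.
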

\begin{proof} We just give an idea of the proof in the case $d=1$:\\
Let us recall that
 $\gamma [f]=\displaystyle\frac{\psi (x)}{k(x)}x^2 f'^2(x)$.\\
We have the representation: $ Y_t =\int_0^t \int_{\R} u\tN (ds ,du),$
so that 
\[ X_t =X_0 +\int_0^t \int_{\R} a(s,X_{s-})u\; \tN (ds,du).\]
The lent particle method yields:
\begin{eqnarray*}
\Gamma [X_t]&=&K_t^2  \int_0^t
\int_X\bK_{s}^2 a^2(s, X_{s-})\gamma[j](u)
N (ds, du) 
\end{eqnarray*}
where $j$ is the identity application: $\gamma[j](u)=\displaystyle\frac{\psi (u)}{k(u)}u^2$.\\
So
\begin{eqnarray*}
\Gamma [X_t]&=&K_t^2  \int_0^t
\int_X\bK_{s}^2 a^2(s, X_{s-})\displaystyle\frac{\psi (u)}{k(u)}u^2
N (ds, du)\\
&=&K_t^2 \sum_{\alpha<t} \bK_{s}^2 a^2(s, X_{s-})\displaystyle\frac{\psi (\Delta Y_s)}{k(\Delta Y_s)}\Delta Y_s^2,
\end{eqnarray*}
and it is easy to conclude.
\end{proof}
{\it Remarks:}\\
(i) We refer to \cite{bouleau-denis2} for other examples and applications.\\
(ii) Let us finally remark that as easily seen,  one can iterate the gradient and so obtain criteria of regularity for the density of Poisson functionals such as solutions of SDE's, this is the object of a forthcoming paper.

 Ecole des Ponts,\\
ParisTech, Paris-Est\\
6 Avenue Blaise Pascal\\ 77455 Marne-La-Vallée Cedex 2
FRANCE\\bouleau@enpc.fr \\  \\ Equipe Analyse et Probabilités,
\\Universit\'{e} d'Evry-Val-d'Essonne,\\Boulevard François Mitterrand\\
91025 EVRY Cedex FRANCE\\ldenis@univ-evry.fr
 \end{document}